\documentclass[epj]{svjour}

\usepackage{amsfonts}
\usepackage{amssymb,latexsym}
\usepackage{amsmath}
\usepackage{enumerate}
\usepackage[small,bf]{caption}
\usepackage{graphics}
\usepackage{graphicx}

\numberwithin{equation}{section}

\def\bfR{\mathbb{R}}

\def\mcH{\mathcal{H}}
\def\mcL{\mathcal{L}}
\def\mcF{\mathcal{F}}

\begin{document}

\title{Instability conditions for some periodic BGK waves in the Vlasov-Poisson system}

\author{Stephen Pankavich\inst{1}
\thanks{\emph{Department of Applied Mathematics and Statistics, Colorado School of Mines, 1500 Illinois St., Golden, CO 80401}
This work was supported by the Center for Undergraduate Research in Mathematics subcontracted under NSF Grant DMS-0636648 and independently by the National Science Foundation under NSF Grants DMS-0908413 and DMS-1211667.}
\and Robert Allen\inst{2}
%
}                     
\offprints{}          
\institute{Department of Applied Mathematics and Statistics, Colorado School of Mines, 1500 Illinois St., Golden, CO 80401
\and Space Science Center and Department of Physics, 8 College Rd., University of New Hampshire, Durham, NH 03824}

\date{Received: February 28, 2014 / Revised version: September 14, 2014}
%

\abstract{A one-dimensional, collisionless plasma given by the Vlasov-Poisson system is considered and the stability properties of periodic steady state solutions known as Bernstein-Greene-Kruskal (BGK) waves are investigated. Sufficient conditions are determined under which BGK waves are linearly unstable under perturbations that share the same period as the equilibria. It is also shown that such solutions cannot support a monotonically decreasing particle distribution function.
\PACS{
      {52.25.Dg}{Plasma kinetic equations} \and
      {02.30.Jr}{Partial differential equations}   \and
      {52.35.-g}{Waves, oscillations, and instabilities in plasmas and intense beams}
     } 
} 
%


%
%






\maketitle

\markboth{S. PANKAVICH AND R. ALLEN}{INSTABILITY OF SOME BGK WAVES}

\section{Introduction}

A plasma is a partially or completely ionized gas, in which the motion of unbound ions generates electric and magnetic fields that strongly affect individual particle motion. When a plasma is of low density or the time scales of interest are sufficiently small, it is deemed to be ``collisionless'', as collisions between particles become infrequent.  Many examples of collisionless plasmas occur in nature, including the solar wind, galactic nebulae, and comet tails.

One fundamental, one-dimensional model of collisionless plasma dynamics is given by a system of partial differential equations known as the Vlasov-Poisson system:
\begin{equation} \tag{VP} \label{VP} \left. \begin{gathered}
\partial_t f + v\partial_x f - E \partial_v f = 0\\
\partial_x E = 1 - \int f dv. \end{gathered} \right \}
\end{equation}
Here $f =f(t,x,v)$ represents the distribution of electrons in the plasma, while $E= E(t,x)$ is the self-consistent electric field generated by ions and electrons. The independent variables, $t > 0$ and $x,v \in \bfR$ represent time, position, and velocity, respectively.  Instead of studying a large collection of ionic species interacting with the electrons, the density of ions is given by a neutralizing background, normalized to $1$ in the equations above. 

Of course, the study of plasmas has been an extremely rich area within both the physics and applied mathematics communities for decades. Early work regarding the stability and instability of plasmas includes \cite{BDM}, \cite{Fowler}, \cite{LewisSymon}, \cite{Pfirsch}, \cite{SPW}, and \cite{Schwarz}.  More recent studies of one-dimensional plasma dynamics include \cite{Balmforth}, \cite{BP}, \cite{Lin}, \cite{LinNonlinear}, and \cite{NP} while for general references regarding the Vlasov equation and models in kinetic theory, such as (\ref{VP}) and its electromagnetic counterpart, the Vlasov-Maxwell system, we mention \cite{Glassey}, \cite{MorrisonTTSP}, and \cite{VKF}. 

In the current study we are interested in the stability properties of time-independent solutions of the system.  More specifically, we wish to study well-known steady states called BGK waves.  In 1957 Bernstein, Greene, and Kruskal \cite{BGK} showed the existence of an infinite family of exact solutions to (\ref{VP}) that have come to be known as BGK waves.  Since then, the stability or instability of these solutions has been of great interest to the plasma physics and applied mathematics communities (\cite{GuoStrauss95}, \cite{GuoStrauss98}, \cite{GuoStrauss99}, \cite{ManfrediBertrand}, \cite{LinNonlinear}).  In particular, recent experimental identifications of electrostatic solitary waves in space plasmas have provided further justification for the study the properties of such steady states. A BGK wave is a steady solution of the form
\begin{equation}
\label{BGK}
\left. \begin{gathered} \mathring{f}(x,v) = \mu \left (\frac{1}{2} v^2 + \phi(x) \right )\\ \mathring{E}(x) = \phi'(x)\end{gathered} \right \}
\end{equation}
where $\phi$ is a periodic solution of
\begin{equation}
\label{phi}
\phi''(x) = 1- \int \mu \left (\frac{1}{2} v^2 + \phi(x) \right ) \ dv.
\end{equation}
In \cite{Lin} the stability properties of such solutions to (\ref{VP}) were studied. There it was determined that any BGK wave is linearly unstable with respect to \emph{multi}-periodic perturbations, that is, perturbations whose periods are integer multiples ($2$ or greater) of the period of the steady state $\phi$. Unfortunately, such a general result in the singly-periodic case, in which the perturbation and BGK wave possess the same period, remains unknown and does not seem likely.  As the author states in \cite{Lin}, one cannot hope that all periodic BGK waves are unstable with respect to such perturbations. Instead, the stability properties may depend delicately on the specific BGK wave of interest. To date little information regarding this question is known, though some results concerning weakly-inhomogeneous solutions have been obtained (\cite{GuoStrauss99}, \cite{ManfrediBertrand}).  The main focus of our paper, then, is to determine sufficient conditions under which these BGK waves $(\mu, \phi)$ are linearly unstable due to \emph{singly}-periodic perturbations.

As Lin constructed a strong framework for studying the problem, we will follow many of the assumptions and previous results developed in \cite{Lin}. To be precise, we make the following assumptions on the BGK waves (\ref{BGK}) - (\ref{phi}) throughout:
\begin{enumerate}[(i)]
\item \label{one} $\mu \in C^1(\bfR)$ is nonnegative.
\item \label{two} $\mu$ satisfies the condition of neutrality, i.e. $$ \int \mu \left ( \frac{1}{2} v^2 \right ) \ dv = 1.$$
\item \label{three} $\mu'$ decays at infinity, i.e. there is $\gamma > 1$ and $C > 0$ such that $$\vert \mu'(y) \vert \leq \frac{C}{1 + \vert y \vert^\gamma}.$$

\item \label{four} We arrange the period of $\phi$ in a specific manner.  Let $P_\phi$ be the minimal period of the solution $\phi$ from (\ref{phi}) so that $$ \phi(x) = \phi(x + P_\phi) $$ for every $x \in \mathbb{R}$ and define the quantities
$$ \begin{gathered}\phi_- = \min_{x \in [0,P_\phi]} \phi(x)\\ \phi_+ = \max_{x \in [0,P_\phi]} \phi(x). \end{gathered}$$
  Then, without loss of generality we may rearrange the starting point to impose conditions on the values of $\phi$ so that it satisfies:
$$\begin{gathered} \phi(0) = \phi(P_\phi) = \phi_+,\\
\phi \left (\frac{P_\phi}{2} \right) = \phi_-,\\
\phi(x) = \phi(P_\phi - x), \quad \forall x \in [0,P_\phi]. \end{gathered} $$  Additionally, without loss of generality we can take $\phi$ to be strictly decreasing on the interval $\left [0,\frac{P_\phi}{2} \right]$.
\end{enumerate}

With this structure in place, we will show that (\ref{phi}) cannot possess solutions for which $\mu$ is strictly decreasing.  
More importantly, we will show in subsequent sections that, while one cannot hope that all such solutions to (\ref{BGK})-(\ref{phi}) are linearly unstable with respect to $P_\phi$-periodic perturbations, conditions do exist which guarantee their instability at the linear level.  To be more precise further definitions are needed.  For $\mu$ given above we define the function $$q(x) = \int \mu'\left (\frac{1}{2}v^2 + \phi(x) \right )\ dv$$ and let $\lambda_0 < 0$ be the smallest eigenvalue, with corresponding first eigenfunction $\psi_0 \in H^2(0,P_\phi)$, of the Sturm-Liouville \cite{Zettl} problem
$$\psi''(x) + (q(x) + \lambda) \psi(x) = 0$$ with $\psi(0) = \psi(P_\phi) = 0$.  We will prove that if $q$ satisfies
\begin{equation}
\tag{v} \label{six}
\int_0^{\frac{1}{2}P_\phi} (q(x) + \lambda_0) (q(x) + \frac{4}{3}\lambda_0) \vert \psi_0(x) \vert^4 \ dx < 0
\end{equation}
the BGK wave $(\mu, \phi)$ must be linearly unstable with respect to perturbations of period $P_\phi$. 
In particular, this demonstrates that for a BGK wave to be stable, the values of $q$ cannot be essentially localized within the interval $[-\lambda_0, -\frac{4}{3} \lambda_0]$. Our main results will be stated precisely in the next section, after deriving the specific partial differential equations that a perturbation must satisfy. 

\section{Linearized Perturbation Equations and Main Results}

To begin this section, we first derive the system of PDEs that perturbations of (\ref{VP}) must satisfy so that we may study their behavior.  To determine the stability properties of these solutions we consider the initial value problem with time dependent perturbations of the form:
\begin{eqnarray*}
f(t,x,v) &=& \mu\left (\frac{1}{2}v^2 + \phi(x) \right) + F(t,x,v)\\
E(t,x) &=& \phi'(x) + \beta(t,x).
\end{eqnarray*}
Using these quantities in (\ref{VP}) yields equations for the perturbations, namely

$$\partial_t F + v \phi' \mu' + v \partial_x F - (\phi' + \beta) v\mu' - (\phi' + \beta) \partial_v F = 0$$
$$  \phi''  + \partial_x \beta= 1 - \int \mu(e) dv - \int F(t,x,v) dv.$$
Since $(\mu,\phi)$ is a time-independent solution to (\ref{VP}), these equations simplify and we arrive at

\begin{equation}
\label{NL}
\left. \begin{gathered}
\partial_t F + v \partial_x F -\phi' \partial_v F =  v\beta\mu' + \beta \partial_v F\\
\partial_x \beta = - \int F dv.
\end{gathered}  \right \} \end{equation}
Notice that the system of partial differential equations (\ref{NL}) is nonlinear due to the appearance of the $\beta \partial_v F$ term.  We focus on studying the linearized system, so we remove this term, which yields

\begin{equation}
\label{Lin}
\left. \begin{gathered}
\partial_t F + v \partial_x F -\phi' \partial_v F =  v\beta\mu'\\
\partial_x \beta = - \int F dv.
\end{gathered}  \right \} \end{equation}
for unknown perturbations of both the distribution function $F(t,x,v)$ and the field $\beta(t,x)$.  This linearized system of PDE then constitutes the equations under study.
Thus, the main question of interest is whether solutions $(F, \beta)$ tend to zero over long times, thereby leading the solution $(f, E)$ of (\ref{VP}) to tend to the BGK waves, or whether solutions to (\ref{Lin}) may remain large over time, thereby causing $f$ and $E$ to stay far from equilibrium. 
Our first result shows that exponentially growing and decaying modes of \eqref{Lin} must occur in pairs of opposite sign.
\begin{theorem}
\label{Teig}
Let $\lambda \in \mathbb{R}$ be given  Then, equation \eqref{Lin} possesses a solution of the form
$$\begin{gathered} F(t,x,v) = e^{\lambda t} \mcF(x,v) \\
\beta(t,x) = e^{\lambda t} \Psi(x)  \end{gathered}$$ 
if and only if it possesses a solution of the form
$$\begin{gathered} F(t,x,v) = e^{-\lambda t} \mcF(x,-v) \\
\beta(t,x) = e^{-\lambda t} \Psi(x).
\end{gathered}$$  
\end{theorem}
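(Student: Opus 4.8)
The plan is to exploit the velocity-reversal symmetry $v\mapsto -v$ that the linearized operator in \eqref{Lin} inherits from the fact that $\phi$ depends on $x$ alone while the profile $\mu'$ is evaluated on the even quantity $\tfrac{1}{2}v^2+\phi(x)$. First I would insert the growing-mode ansatz $F=e^{\lambda t}\mcF(x,v)$, $\beta=e^{\lambda t}\Psi(x)$ into \eqref{Lin} and cancel the common factor $e^{\lambda t}$, which reduces the problem to the stationary eigenvalue system
\begin{equation}
\label{eq:stat}
\lambda\,\mcF + v\,\partial_x\mcF - \phi'\,\partial_v\mcF = v\,\Psi\,\mu'\!\left(\tfrac{1}{2}v^2+\phi(x)\right), \qquad \Psi'(x) = -\int \mcF(x,v)\,dv .
\end{equation}
I would then take as a candidate for the $-\lambda$ mode the reflected pair $\tilde{\mcF}(x,v)=\mcF(x,-v)$ and $\tilde{\Psi}(x)=\Psi(x)$, that is $\tilde F=e^{-\lambda t}\mcF(x,-v)$ and $\tilde\beta=e^{-\lambda t}\Psi(x)$, and verify directly that this pair satisfies \eqref{Lin}.

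For the transport equation I would compute the left-hand side of \eqref{Lin} on $\tilde F$, noting that the chain rule gives $\partial_v[\mcF(x,-v)]=-(\partial_v\mcF)(x,-v)$ so that the term $-\phi'\partial_v\tilde F$ returns $+\phi'(\partial_v\mcF)(x,-v)$, while the time derivative contributes $-\lambda\,\mcF(x,-v)$. Separately I would write the first relation in \eqref{eq:stat} at the reflected argument $(x,-v)$ and multiply through by $-1$. Because $v\,\partial_x\mcF$ and the forcing $v\,\Psi\,\mu'$ are odd in $v$ while $\mu'\!\left(\tfrac12 v^2+\phi\right)$ is even, the two computations produce identical equations, which is precisely the stationary system \eqref{eq:stat} with $\lambda$ replaced by $-\lambda$. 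For the field equation I would use the substitution $w=-v$ in the velocity integral, so that $\int\mcF(x,-v)\,dv=\int\mcF(x,w)\,dw$, whence the Poisson relation is preserved verbatim and $\tilde\Psi'=-\int\tilde{\mcF}\,dv$ holds.

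Finally, I would observe that the map $(\mcF(x,v),\Psi)\mapsto(\mcF(x,-v),\Psi)$ is an involution which simultaneously exchanges $\lambda$ and $-\lambda$; applying it a second time restores the original mode, since $v\mapsto-(-v)=v$ and $-(-\lambda)=\lambda$. This supplies the reverse implication and hence the claimed equivalence.

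The argument is purely computational and presents no real analytic difficulty; the sole point requiring care is the sign bookkeeping in the transport equation. The term $-\phi'\partial_v\mcF$ reverses sign twice---once through the chain rule in $\partial_v[\mcF(x,-v)]$ and once through the multiplication by $-1$ of the reflected equation---whereas the manifestly odd terms $v\,\partial_x\mcF$ and $v\,\Psi\,\mu'$ reverse only once. Tracking these cancellations correctly is what makes the reflected ansatz land exactly on the $-\lambda$ eigenvalue problem rather than on some other equation.
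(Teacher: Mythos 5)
Your proposal is correct and follows essentially the same route as the paper's own proof: substitute the exponential ansatz to reduce \eqref{Lin} to a stationary system, then verify that the velocity-reflected pair $(\mcF(x,-v),\Psi(x))$ satisfies that system with $\lambda$ replaced by $-\lambda$ via the change of variables $w=-v$ and an overall sign flip. Your additional remark that the reflection is an involution, which supplies the reverse implication, is a small but welcome point of care that the paper leaves implicit.
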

\begin{proof}[Theorem \ref{Teig}]
The result of the theorem follows from the time-reversible Hamiltonian nature of \eqref{Lin} (see \cite{HagstromMorrison} for a more general result concerning the Vlasov equation).
In this case, we merely utilize the specific forms of solutions to the linearized equation.  Namely, using 
$$\begin{gathered} F(t,x,v) = e^{\lambda t} \mcF(x,v) \\
\beta(t,x) = e^{\lambda t} \Psi(x)  \end{gathered}$$ within (\ref{Lin}), we see that this is a solution if and only if
\begin{equation}
\label{mcFsystem}
\left. 
\begin{gathered}
\lambda \mcF(x,v) + v \partial_x \mcF(x,v) - \phi' \partial_v \mcF(x,v) = -\psi' v \mu'\\
\psi''(x) = \int \mcF(x,v) \ dv.
\end{gathered} \right \}
\end{equation}
Next, we instead impose 
$$\begin{gathered} F(t,x,v) = e^{-\lambda t} \mcF(x,-v) \\
\beta(t,x) = e^{-\lambda t} \Psi(x)
\end{gathered}$$  
within \eqref{Lin} and find
$$\begin{gathered}
-\lambda \mcF(x,-v) + v \partial_x \mcF(x,-v) + \phi' \partial_v \mcF(x,-v) = -\psi' v \mu'\\
\psi''(x) = \int \mcF(x,-v) \ dv.
\end{gathered}$$
Then, writing $w = -v$ in the first equation and removing a negative sign in front of each term, we find
$$\lambda \mcF(x,w) + w \partial_x \mcF(x,w) - \phi' \partial_v \mcF(x,w) = -\psi' w \mu'$$ 
which is identical to the first equation of \eqref{mcFsystem} under the relabeling $w=v$.
Changing variables using $w = -v$ within the integral of the second equation then yields the second equation of \eqref{mcFsystem}.  
Therefore, the two solutions satisfy the same system of equations, and the proof is complete.
\end{proof}

Therefore, one cannot expect asymptotic stability of solutions as all decaying modes must give rise to a growing mode.  We note, however, that Theorem \ref{Teig} does not guarantee the existence of either solution, and this is the focus of the remainder of the paper.
Our main result concerning the linear instability of periodic BGK waves (\ref{BGK}) - (\ref{phi}), is stated precisely in the following theorem.
\begin{theorem}
\label{T1}
Let $\mathring{f}$ and $\mathring{E}$ be the periodic BGK solutions of (\ref{VP}) described by (\ref{BGK}) with period $P_\phi$.  Assuming conditions {\normalfont (\ref{one}) - (\ref{six})}, there is $\lambda > 0$ and a solution of the form
$$\begin{gathered} F(t,x,v) = e^{\lambda t} \mcF(x,v) \\
\beta(t,x) = e^{\lambda t} \Psi(x)  \end{gathered}$$ to the linearized perturbation equations (\ref{Lin})
where $\mcF(\cdot, v)$ is a $P_\phi$-periodic function for every $v \in \bfR$, and $\Psi = -\psi'$ with $\psi \in H^2(0,P_\phi)$.
\end{theorem}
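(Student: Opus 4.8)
The plan is to convert the search for a growing mode into the search for a positive real number $\lambda$ lying in the kernel of an explicit self-adjoint operator, and then to produce such a $\lambda$ by an intermediate-value argument in which hypothesis \eqref{six} supplies the sign at small growth rates. First I would invert the transport operator. Writing $D = v\partial_x - \phi'\partial_v$ for the generator of the steady characteristics $(X(s),V(s))$ and $e = \tfrac12 v^2 + \phi(x)$ for the conserved energy, the first equation of \eqref{mcFsystem} reads $(\lambda + D)\mcF = -v\psi'\mu'(e)$. Since $\lambda>0$ and $\mu'$ decays by \eqref{three}, I integrate along characteristics against the factor $e^{\lambda s}$ over $(-\infty,0]$; because $\mu'(e)$ is constant on trajectories and $v\psi'(x) = \tfrac{d}{ds}\psi(X(s))$, an integration by parts gives the explicit representation
\[ \mcF(x,v) = -\mu'(e)\left(\psi(x) - \lambda \int_{-\infty}^0 e^{\lambda s}\psi(X(s))\,ds\right). \]
Substituting into $\psi'' = \int \mcF\,dv$ and using $\int \mu'(e)\,dv = q(x)$ produces the scalar reduced equation $\psi'' + q\psi = \lambda\, \mcK_\lambda\psi$, where $\mcK_\lambda\psi(x) := \int \mu'(e)\,P\,dv$ and $P = P_{\lambda,\psi} := (\lambda+D)^{-1}\psi = \int_{-\infty}^0 e^{\lambda s}\psi(X(s))\,ds$. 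Thus the theorem reduces to producing $\lambda>0$ and a nontrivial periodic $\psi\in H^2$ solving this equation; the mode is then recovered from the displayed formula for $\mcF$ and from $\Psi = -\psi'$.

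Next I would package this as a self-adjoint eigenvalue problem for the family $\mathcal{A}(\lambda) := -\partial_x^2 - q + \lambda \mcK_\lambda$ acting on $P_\phi$-periodic functions (working modulo the trivial constant mode, which always lies in $\ker\mathcal{A}(\lambda)$ and yields $\mcF\equiv 0$). Because $\mu'(e)$ is even in $v$ and the characteristic flow is time-reversible, the $dv$-integration symmetrizes $\mcK_\lambda$ to a kernel supported on the flow, so $\mcK_\lambda$ is bounded and self-adjoint; this uses the symmetry arranged in \eqref{four}. The Liouville (measure- and energy-preserving) nature of $D$ then gives the cancellation $\iint \mu'(e)\,P\,DP\,dx\,dv = 0$, which collapses the quadratic form to the clean expression
\[ \langle \mathcal{A}(\lambda)\psi,\psi\rangle = \int_0^{P_\phi}\!|\psi'|^2\,dx - \int_0^{P_\phi}\! q|\psi|^2\,dx + \lambda^2\!\iint \mu'(e)\,|P|^2\,dx\,dv, \]
which depends continuously (indeed analytically) on $\lambda>0$.

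I would then run the intermediate-value theorem on $\sigma(\lambda) := \inf\operatorname{spec}\mathcal{A}(\lambda)$. For large $\lambda$ the kernel $\lambda e^{\lambda s}\,ds$ is an approximate identity at $s=0$, so $\lambda\mcK_\lambda\to q$, $\mathcal{A}(\lambda)\to -\partial_x^2$, and $\sigma(\lambda)$ tends to the first nonconstant periodic eigenvalue $(2\pi/P_\phi)^2>0$. To obtain the opposite sign at small growth rate I would use the Sturm--Liouville ground state $\psi_0$ as a comparison/trial state. Since $\int_0^{P_\phi}|\psi_0'|^2 - \int_0^{P_\phi} q|\psi_0|^2 = \lambda_0\|\psi_0\|^2$, the identity above yields the scalar quantity
\[ \langle \mathcal{A}(\lambda)\psi_0,\psi_0\rangle = \lambda_0\|\psi_0\|^2 + \lambda^2\iint \mu'(e)\,P_{\lambda,\psi_0}^2\,dx\,dv , \]
and the claim is that hypothesis \eqref{six} is exactly the computable criterion forcing this to be negative for a suitable small $\lambda>0$; then $\sigma(\lambda)<0$ there, and a zero crossing $\sigma(\lambda^\ast)=0$ with $\lambda^\ast>0$ produces a nontrivial (mean-zero, $P_\phi$-periodic) $\psi$ in the kernel, hence the asserted growing mode.

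The heart of the proof, and its main obstacle, is the small-$\lambda$ analysis of $\lambda^2\iint \mu'(e)P_{\lambda,\psi_0}^2$. This is a singular, near-resonant limit: passing to action--angle variables $(e,\theta)$ with trajectory frequency $\omega(e)$ and expanding $\psi_0(X(s)) = \sum_k a_k(e)e^{ik(\theta+\omega s)}$, the resolvent acts as $(\lambda+ik\omega)^{-1}$, so the zero-frequency ($k=0$, trajectory-averaged) part survives with weight $O(1)$ while the oscillatory parts are suppressed like $\lambda^2/(\lambda^2+k^2\omega^2)$. It is the competition between these contributions, expanded to the order that produces the pointwise factor $(q+\lambda_0)(q+\tfrac43\lambda_0)|\psi_0|^4$, that yields \eqref{six}, the $[0,\tfrac12 P_\phi]$ integration reflecting the reflection symmetry of \eqref{four}. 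Making this rigorous requires separating the secular averaged term from the $k\neq 0$ corrections, justifying the expansion and the limit under the $dv$ and phase-space integrals (using the decay \eqref{three} and the regularity and symmetry of $\psi_0$), and controlling the separatrix between trapped and passing orbits where $\omega(e)\to 0$ and periods diverge. By comparison, the transport inversion, the self-adjoint reduction via the quadratic-form identity, and the final intermediate-value step are comparatively routine.
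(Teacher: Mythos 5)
Your reduction to the dispersion operator and the intermediate-value argument in $\lambda$ is essentially the framework of Lin that the paper invokes as Lemma \ref{L1}: a growing mode exists once one exhibits a single trial function $\psi$ with $\mcL[\psi]<0$, where $\mcL[\psi]=\lim_{\lambda\to 0^+}\langle A_\lambda\psi,\psi\rangle$ consists of the local term $\int(|\psi'|^2-q|\psi|^2)$ plus the trajectory-averaged terms $\iint\mu'(e)\langle\psi\rangle^2$ over free and trapped orbits. Up to that point you are on the paper's track. The gap is in the step you yourself identify as the heart of the proof: you take the Sturm--Liouville ground state $\psi_0$ itself as the trial function and assert that hypothesis \eqref{six} will emerge from a small-$\lambda$, action--angle expansion of $\lambda^2\iint\mu'(e)P_{\lambda,\psi_0}^2$. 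That route does not close. With $\psi_0$ as trial function the local part contributes $\lambda_0\|\psi_0\|^2<0$, but the averaged terms $\iint\mu'(e)\langle\psi_0\rangle^2$ survive in the limit with a sign governed by $\mu'(e)$, which Theorem \ref{T2} shows cannot be kept nonpositive; nothing in \eqref{six} controls them, and no expansion in $\lambda$ will manufacture the quartic weight $|\psi_0|^4$ and the quadratic polynomial $(q+\lambda_0)(q+\tfrac43\lambda_0)$ from a quantity that is quadratic in $\psi_0$.

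The paper's key idea, which your proposal is missing, is the choice of trial function $\psi=\psi_0\psi_0'$ (quadratic in the ground state --- this is where the fourth power of $\psi_0$ comes from). By the reflection symmetry of \eqref{four}, $\psi_0$ is even and $\psi_0'$ odd about $\tfrac12 P_\phi$, so $\psi$ is odd about the midpoint and every orbit average $\int\psi(y)\,(2(e-\phi(y)))^{-1/2}dy$ vanishes identically, for free and trapped particles alike; the problematic nonlocal terms are annihilated exactly rather than estimated. The local term is then evaluated by two integrations by parts against the eigenvalue equation $\psi_0''+(q+\lambda_0)\psi_0=0$, yielding precisely $2\int_0^{P_\phi/2}(q+\lambda_0)(q+\tfrac43\lambda_0)|\psi_0|^4\,dx$, which is negative by \eqref{six}. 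In short, \eqref{six} is not a resonance criterion extracted from the $\lambda\to 0^+$ asymptotics of the resolvent; it is the sign condition on an explicitly computed quadratic form for a symmetry-adapted trial function. Without that construction your argument reduces the theorem to an unproved (and, as stated, false) claim about $\mcL[\psi_0]$.
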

Hence, there exists a growing mode for the linearized perturbation equations with period $P_\phi$, and from Theorem \ref{Teig} one can construct a corresponding decaying mode from this solution, as well. 
Many authors have investigated questions in this vein under specific conditions on the function $q$, for instance, see \cite{AshBen} and \cite{Lavine} for results concerning single-well and convex functions, respectively. In the case of Theorem \ref{T1}, the properties of $q$ greatly depend on the local behavior of the distribution function $\mu$ and potential $\phi$, and even for different single-well or convex profiles, assumption (\ref{six}) may be satisfied or fail to hold. 
Regarding the proof, the main idea hinges on utilizing an equivalent formulation of the problem developed in \cite{Lin}.
Prior to stating the lemma containing this result, we first construct the characteristic curves for the particles in (\ref{Lin}).  Hence, define the curves $X(s,x,v)$ and $V(s,x,v)$, which we shall often abbreviate as $X(s)$ and $V(s)$ respectively, as solutions of the system of ordinary differential equations
\begin{equation}
\label{char}
\left.\begin{gathered}
\frac{\partial X}{\partial s}=V(s,x,v), \\
X(0,x,v)=x, \\
\frac{\partial V}{\partial s}=-\phi'(X(s,x,v)), \\
V(0,x,v)=v.
\end{gathered} \right \}
\end{equation}
From the definition of these curves, we can immediately construct an invariant of the physical system, namely the particle energy $$e = \frac{1}{2} v^2 + \phi(x) = \frac{1}{2} V(s)^2 + \phi(X(s))$$ which is independent of the time variable $s$.  This is obtained by multiplying the equation for $\frac{\partial V}{\partial s}$ in (\ref{char}) by $V(s)$ and expressing the remaining terms as a total derivative in $s$.  Throughout, we will denote both the particle energy and the exponential function by $e$, leaving the reader to differentiate between them due to context.  For instance, we will often write $\mu(e)$ in the future instead of $\mu \left ( \frac{1}{2} v^2 + \phi(x) \right )$ or $\mu \left ( \frac{1}{2} V(s)^2 + \phi(X(s)) \right )$ even though the three terms are equivalent. With this in hand, we may now summarize the previous work of \cite{Lin} in the context of singly-periodic perturbations of BGK waves.
\begin{lemma}[\cite{Lin}]
\label{L1}
Let $\lambda > 0$ be given.  There exists a nontrivial solution to (\ref{Lin}) of the form $$\begin{gathered} F(t,x,v) = e^{\lambda t} \mcF(x,v)\\ \beta(t,x) = e^{\lambda t} \Psi(x) \end{gathered}$$ 
if the functional $\mcL[\psi]$ defined by
\begin{eqnarray*}
\mcL[\psi] & := & \int_0^{P_\phi} \biggl [ \vert \psi'(x) \vert^2 - \int \mu'(e) \ dv \vert \psi(x) \vert^2 \biggr ] \ dx\\
& & \quad + \int_{\phi_+}^\infty \mu'(e) \frac{1}{P_f} \biggl ( \int_0^{P_\phi} \frac{\psi(y)}{\sqrt{2(e - \phi(y))}} \ dy \biggr )^2 \ de\\
& & \quad + 2 \int_{\phi_-}^{\phi_+} \mu'(e) \frac{1}{P_t} \biggl ( \int_{\alpha}^{P_\phi - \alpha} \frac{\psi(y)}{\sqrt{2(e - \phi(y))}} \ dy \biggr )^2 \ de.
\end{eqnarray*}
satisfies $\mcL[\psi] < 0$ for some $\psi \in H^1(0,P_\phi)$.
Here, $\alpha \in \left [ 0,\frac{1}{2}P_\phi \right] $ is the unique point in the interval such that $\phi(\alpha) = e$, for a given $e \in [\phi_-,\phi_+]$, while $$\begin{gathered} P_f = \int_{0}^{P_\phi} \frac{1}{\sqrt{2(e-\phi(y))}} dy,\\ P_t = \int_{\alpha}^{P_\phi - \alpha} \frac{1}{\sqrt{2(e-\phi(y))}} dy. \end{gathered}$$
Additionally, $\mcF$ and $\Psi$ are given by
\begin{equation}
\label{F}
\mcF(x,v) = \mu'(e) \left ( \int_{-\infty}^{0} \lambda e^{\lambda s} \psi \bigl ( X(s) \bigr ) ds -\psi(x) \right )
\end{equation}
and $\Psi = -\psi'$, where
where $X(s)$ is the spatial solution of the characteristic equations (\ref{char}).  
\end{lemma}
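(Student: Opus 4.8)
The plan is to handle the two equations of \eqref{mcFsystem} in turn: first solve the transport equation for $\mcF$ in terms of $\psi$ along the characteristics \eqref{char}, then feed the result into the Poisson equation to obtain a single nonlocal equation for $\psi$ alone, and finally recast that equation variationally so that the hypothesis $\mcL[\psi]<0$ converts into the existence of a growth rate $\lambda>0$. For the first step, along \eqref{char} the left side of the transport equation equals $\lambda\mcF+\frac{d}{ds}\mcF(X(s),V(s))$, while $e$ is conserved so $\mu'(e)$ is constant along the flow and $\psi'(X(s))V(s)=\frac{d}{ds}\psi(X(s))$. Multiplying by $e^{\lambda s}$ makes the equation an exact derivative,
\[ \frac{d}{ds}\left[e^{\lambda s}\mcF(X(s),V(s))\right]=-\mu'(e)\,e^{\lambda s}\frac{d}{ds}\psi(X(s)); \]
integrating over $(-\infty,0]$ — the boundary term at $-\infty$ vanishing since $\lambda>0$ and, by the decay hypothesis (\ref{three}), $\mcF/\mu'(e)$ stays bounded along the bounded characteristics — and integrating by parts once in $s$ produces precisely formula \eqref{F}, together with $\Psi=-\psi'$ from the field relation.

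Inserting \eqref{F} into $\psi''=\int\mcF\,dv$ then gives a closed integro-differential equation for $\psi$, which I recognize as the Euler--Lagrange equation of a $\lambda$-dependent quadratic functional: pairing the equation against $\psi$ and integrating over $[0,P_\phi]$ yields
\[ \mcL_\lambda[\psi]:=\int_0^{P_\phi}|\psi'|^2\,dx-\int_0^{P_\phi}\!\int\mu'(e)|\psi|^2\,dv\,dx+\lambda\int_0^{P_\phi}\!\int\mu'(e)\,\psi(x)\!\int_{-\infty}^0 e^{\lambda s}\psi(X(s))\,ds\,dv\,dx. \]
Its principal part is $-\partial_x^2$ and the remaining terms form a self-adjoint, relatively compact perturbation (the symmetry of the memory term being the infinitesimal analogue of the reversibility exploited in Theorem \ref{Teig}), so the associated operator has discrete spectrum bounded below, and a nontrivial mode of rate $\lambda$ exists exactly when its lowest eigenvalue $\kappa(\lambda):=\inf_{\|\psi\|_{L^2}=1}\mcL_\lambda[\psi]$ equals zero.

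It then remains to continue in $\lambda$. As $\lambda\to\infty$ the weight $\lambda e^{\lambda s}\mathbf{1}_{s<0}$ concentrates at $s=0$, so the memory term tends to $\int\!\int\mu'(e)|\psi|^2$, cancels the middle term, and leaves $\mcL_\lambda[\psi]\to\int_0^{P_\phi}|\psi'|^2\ge 0$; hence $\kappa(\lambda)>0$ for large $\lambda$. As $\lambda\to 0^+$, for each energy the orbit $s\mapsto\psi(X(s))$ is periodic, so $\lambda\int_{-\infty}^0 e^{\lambda s}\psi(X(s))\,ds$ converges to the time-average of $\psi$ along that orbit; splitting the velocity integral into the free region $e>\phi_+$ and the trapped region $\phi_-<e<\phi_+$, using the symmetric single-well geometry imposed in (\ref{four}) so that each trapped orbit turns at $\alpha$ and $P_\phi-\alpha$, and summing over the two signs of $v$, these averages reproduce the $P_f$- and $P_t$-weighted integral terms, i.e. $\mcL_\lambda\to\mcL$. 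Thus $\kappa(0^+)=\inf\mcL<0$ under the standing hypothesis that $\mcL[\psi]<0$ for some $\psi$, and since $\kappa$ is continuous in $\lambda$ the intermediate value theorem furnishes some $\lambda^\ast>0$ with $\kappa(\lambda^\ast)=0$; its minimizer solves the integro-differential equation, and rebuilding $\mcF$ from \eqref{F} and $\Psi=-\psi'$ delivers the claimed growing mode.

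I expect the main obstacle to be the $\lambda\to 0^+$ limit. Justifying the passage to orbit-averages and the clean free/trapped decomposition demands uniform control of the velocity integrals near the turning points $v=0$, where $1/\sqrt{2(e-\phi)}$ is integrably singular, and near the separatrix $e=\phi_+$, where $P_f$ and $P_t$ degenerate; the decay hypothesis (\ref{three}) on $\mu'$ is exactly what keeps these integrals and the limiting operator well defined and secures the continuity of $\kappa$ down to $\lambda=0$.
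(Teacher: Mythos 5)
Your proposal follows essentially the same route as the paper's own (sketched) argument, which defers the details to \cite{Lin}: integrate the transport equation along the characteristics \eqref{char} to obtain \eqref{F}, reduce the Poisson equation to a family of Hermitian dispersion operators/quadratic forms in $\psi$ parametrized by $\lambda$, and then combine positivity as $\lambda\to+\infty$ with negativity as $\lambda\to 0^{+}$ (where the Abel average over orbits produces the $P_f$- and $P_t$-weighted terms of $\mcL$) via the Intermediate Value Theorem. Your write-up is in fact more detailed than the paper's sketch, and you correctly flag the $\lambda\to 0^{+}$ limit near the turning points and the separatrix as the technically delicate step handled in \cite{Lin}.
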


\begin{figure*}[t]
\centering \includegraphics[scale=1]{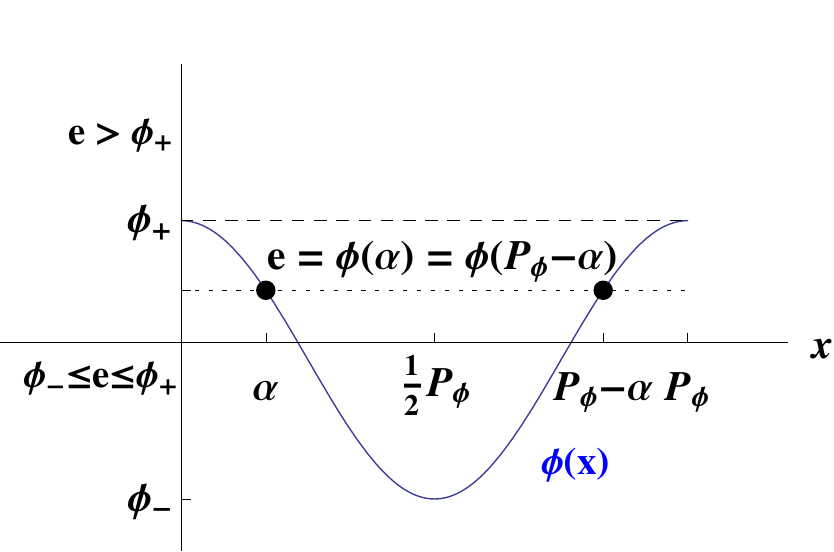}
\caption{A representative graph of the potential $\phi(x)$ for the BGK waves of (\ref{BGK})-(\ref{phi}).  The particles are divided into free particles (with period $P_f$), whose energy lies completely above the dashed line ($e > \phi_+$) and hence above the graph of $\phi$, and trapped particles (with period $P_t$), whose energy lies on or below the dashed line ($\phi_- \leq e \leq \phi_+$) and intersects the graph of $\phi$. For the latter, the symmetry of $\phi$ implies that the dotted line denoting the particle energy must intersect the graph of $\phi$ in exactly two points within the interval $[0,P_\phi]$. One of these points (denoted by $\alpha$) must be less than $\frac{1}{2} P_\phi$ and the other, at the reflected point $P_\phi - \alpha$, must be greater than $\frac{1}{2} P_\phi$. } \label{potential}
\end{figure*}

\noindent We refer the reader to \cite{Lin} for the details of the proof, but we can also briefly sketch the main ideas.  More specifically, utilizing the form of a growing mode solution $(F,\beta)$ from Theorem \ref{T1} within the linearized perturbation equations, it is a straightforward computation to show that \eqref{Lin} has a solution if and only if $A_\lambda \psi =0$ for an associated family of Hermitian dispersion operators $A_\lambda$.  With this, \cite{Lin} proves the continuity of $A_\lambda$ with respect to $\lambda > 0$ and positivity of $A_\lambda$ as $\lambda \to +\infty$, while the condition of Lemma \ref{L1}, namely $\mcL[\psi] < 0$, guarantees that $A_\lambda$ is negative as $\lambda \to 0^+$.  Hence, an application of the Intermediate Value Theorem then implies the existence of a value of $\lambda > 0$ for which $A_\lambda \psi = 0$.
To keep technical details to a minimum, we will omit a complete description of the dispersion operator $A_\lambda$ and focus instead on proving the condition $\mcL[\psi] < 0$.

Figure \ref{potential} contains a representation of the periodic wave $\phi$ and displays the associated periods of the particles occurring within $\mcL[\psi]$. In view of Lemma \ref{L1}, we can now see that the sign of $\mu'(e)$ may play a large role in the stability of solutions.  For instance, if $\mu'(e) \leq 0$ for $e \in [\phi_-,\infty)$ one could guarantee the last two expressions in $\mcL[\psi]$ remain non-positive while choosing $\psi$ to minimize the first integral. However, as we will show, BGK waves cannot support such distribution functions, meaning $\mu'(e)$ cannot remain nonpositive for all energies $e \in [\phi_-, \infty)$.

\begin{theorem}
\label{T2}
Assume conditions (\ref{one})-(\ref{four}) hold and the BGK wave $(\mu,\phi)$ satisfies (\ref{phi}), then $$q(x) = \int \mu'\left ( \frac{1}{2}v^2 + \phi(x) \right ) dv > 0$$ for some $x \in [0,P_\phi].$  In particular, $\mu$ cannot satisfy $\mu'(e) \leq 0$ for all $e \in [\phi_-,\infty)$.
\end{theorem}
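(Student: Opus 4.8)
The plan is to exploit the conservative (Newtonian) structure hidden in the profile equation (\ref{phi}). First I would introduce the spatial density as a function of the potential value, $\rho(y) := \int \mu(\tfrac12 v^2 + y)\,dv$, so that (\ref{phi}) reads as the autonomous second-order ODE $\phi''(x) = 1 - \rho(\phi(x))$. The decay hypothesis (\ref{three}) legitimizes differentiation under the integral sign, giving $\rho'(y) = \int \mu'(\tfrac12 v^2 + y)\,dv$ and hence the pointwise identity $q(x) = \rho'(\phi(x))$. In this language the sign of $q$ is exactly the monotonicity of $\rho$ read along the range of $\phi$, and this is the observation that converts the claim about $q$ into a statement about the profile ODE.

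I would then argue by contradiction, supposing $q(x) \le 0$ for every $x \in [0,P_\phi]$. Since $\phi$ is continuous and, by (\ref{four}), attains every value between $\phi_-$ and $\phi_+$, the identity $q = \rho' \circ \phi$ shows $\rho'(y) \le 0$ for all $y \in [\phi_-,\phi_+]$; that is, $\rho$ is nonincreasing on $[\phi_-,\phi_+]$, so in particular $\rho(\phi_+) \le \rho(\phi_-)$. The complementary inequality I would extract from the extrema of $\phi$: by (\ref{four}) the profile has a maximum at $x=0$ and a minimum at $x = P_\phi/2$, so the second-derivative test gives $\phi''(0) \le 0$ and $\phi''(P_\phi/2) \ge 0$. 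Evaluating $\phi'' = 1 - \rho(\phi)$ at these two points yields $\rho(\phi_+) \ge 1 \ge \rho(\phi_-)$, hence $\rho(\phi_+) \ge \rho(\phi_-)$.

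Combining the two inequalities forces $\rho(\phi_+) = \rho(\phi_-) = 1$, and since $\rho$ is monotone with equal endpoint values it must be the constant $1$ on all of $[\phi_-,\phi_+]$. But then $\phi'' = 1 - \rho(\phi) \equiv 0$, so $\phi$ is affine; a nonconstant affine function cannot be periodic, which contradicts (\ref{four}) (this condition forces $\phi_+ > \phi_-$, so $\phi$ is genuinely nonconstant). This contradiction proves $q(x) > 0$ for some $x$. The final clause is then immediate: if $\mu'(e) \le 0$ for all $e \ge \phi_-$, then because $\tfrac12 v^2 + \phi(x) \ge \phi_-$ for all $x,v$, the integrand defining $q$ is everywhere nonpositive, forcing $q \le 0$ throughout, which we have just ruled out.

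The main obstacle is the \emph{rigidity} step rather than any one inequality. Each ingredient alone is weak: the pointwise sign of $q$ yields only a one-sided monotonicity of $\rho$, and the endpoint curvature tests pin down only the ordering of the two boundary values. The contradiction materializes solely when these are combined to squeeze $\rho$ down to the constant $1$ and thereby trivialize the profile. Secondary care is needed to justify that the curvature inequalities at the extrema are genuine (using $\phi \in C^2$, guaranteed by (\ref{one}) together with (\ref{three})) and that the pointwise hypothesis on $q$ transfers to monotonicity of $\rho$ across the entire interval $[\phi_-,\phi_+]$ rather than only at sampled values of $\phi$.
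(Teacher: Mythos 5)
Your proof is correct, and it takes a genuinely different route from the paper's. The paper differentiates the profile equation once more to obtain $\phi''' + q\,\phi' = 0$, notes that under the contradiction hypothesis $q \le 0$ the signs of $\phi'''$ and $\phi'$ agree on $\left(0,\tfrac12 P_\phi\right)$, and derives the contradiction from the resulting monotonicity of $\phi''$, which is incompatible with $\phi'$ vanishing at both endpoints while being negative in between. You instead keep the equation in its autonomous second-order form $\phi'' = 1 - \rho(\phi)$, observe that $q = \rho'\circ\phi$, and squeeze: the hypothesis makes $\rho$ nonincreasing on the full range $[\phi_-,\phi_+]$ of $\phi$, while the second-derivative test at the maximum and minimum of $\phi$ forces $\rho(\phi_+) \ge 1 \ge \rho(\phi_-)$, so $\rho \equiv 1$, $\phi$ is affine, and periodicity (with $\phi_+ > \phi_-$, guaranteed by the strict monotonicity in (\ref{four})) is violated. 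Your version has the minor advantages of not requiring the extra derivative (it works at the $C^2$ level, whereas the paper bootstraps to $\phi \in C^3$) and of isolating the contradiction at just the two extremal points plus a global monotonicity statement; the paper's version is closer in spirit to the Sturm--Liouville framework it reuses in the proof of Theorem \ref{T1}, where $\phi'$ reappears as an eigenfunction of $\frac{d^2}{dx^2} + q$. Your attention to the two technical points --- justifying differentiation under the integral via (\ref{three}), and transferring the sign of $q$ to $\rho'$ on all of $[\phi_-,\phi_+]$ via the intermediate value theorem rather than only at sampled potential values --- is exactly what is needed to make the argument airtight, and the deduction of the final clause from $\tfrac12 v^2 + \phi(x) \ge \phi_-$ matches the paper's.
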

We remark that the second statement of the the theorem regarding the non-monotonicity of $\mu(e)$ has been shown before within \cite{EFM} and \cite{VOK}.
Hence, the signs of the complicated terms in the representation of Lemma \ref{L1} depend very delicately on the behavior of $\mu'(e)$.  Regardless, we can eliminate this difficulty by utilizing the BGK waves and their symmetry to prove Theorem \ref{T1}. 

\begin{proof}[Theorem \ref{T1}]
Since the latter terms in the representation of $\mcL[\psi]$ are complicated, we will construct a specific choice of $\psi \in H^1(0,P_\phi)$ for which the first piece is negative and use symmetry to eliminate the contribution of the last two expressions.  To begin, recall (\ref{phi}):
$$\phi''(x) = 1 - \int \mu(e) dv.$$
We take an $x$-derivative to find
$$\phi'''(x) = -\left ( \int \mu'(e) \ dv \right ) \phi'(x)$$
or 
$$\phi'''(x) +  q(x) \phi'(x) = 0.$$
Hence, the function $u = \phi'$ satisfies the Sturm-Liouville problem
$$ \left . \begin{gathered}u''(x) + (q(x) + \lambda) u(x) = 0\\
u(0) = u(P_\phi) = 0
\end{gathered} \right. $$ for $u \in H^2(0,P_\phi)$ with corresponding eigenvalue $\lambda = 0$.  However, $\phi$ attains a maximum at $\frac{1}{2}P_\phi$ and possesses no other critical points on $(0,P_\phi)$.  Thus, $\phi'(\frac{1}{2}P_\phi) = 0$ and because $\phi'$ has exactly one root in $(0,P_\phi)$, it must be the second eigenfunction \cite{Zettl}. Therefore, there exists a first eigenfunction $u_0 \in \mcH$ and eigenvalue $\lambda_0 < 0$ such that
\begin{equation}
\label{psi0}
u_0''(x) + (q(x) + \lambda_0) u_0(x) = 0
\end{equation}
and $u_0(0) = u_0(P_\phi) = 0$.  Using the symmetry of $q$, namely $q(P_\phi - x) = q(x)$ for every $x \in [0,P_\phi]$, we see that the function $\tilde{u}_0(x) := u_0(P_\phi - x)$ is also a solution of the Sturm-Liouville problem with the same eigenvalue.  Due to uniqueness of solutions, it follows that there is $C \in \mathbb{R}$ such that $u_0(x) = Cu_0(P_\phi - x)$.  Evaluating this at the point $x = \frac{1}{2}P_\phi$, we see that either $C=1$ or $u_0(\frac{1}{2}P_\phi) = 0$.  As $u_0$ is the first eigenfunction, it has no roots in $(0,P_\phi)$.  Thus, $C = 1$ and  $u_0(x) = u_0(P_\phi-x)$.  Since $u_0 \in H^2(0,P_\phi)$ we may take a derivative of (\ref{psi0}) and find that $u_0'$ satisfies the symmetry condition $u_0'(P_\phi - x) = - u_0'(x)$ for every $x \in [0,P_\phi]$.  With these functions in place, define $\psi(x) = u_0(x) u_0'(x)$ for $x \in [0,P_\phi]$.  Using the aforementioned properties of $u_0$ and $u_0'$, we see that $\psi$ satisfies 
$$\psi(0) =\psi(P_\phi) = 0$$
with $\psi(P_\phi - x) = - \psi(x)$ for all $x \in [0,P_\phi]$. In addition, $\psi$ can be extended to the whole space in a smooth manner by imposing $P_\phi$-periodicity since
\begin{eqnarray*} \
\psi'(P_\phi) & = & \vert u_0'(P_\phi) \vert^2 + u_0(P_\phi) u_0''(P_\phi) \\
& = & \vert u_0'(P_\phi) \vert^2 \\
& = & \vert u_0'(0) \vert^2 \\
& = & \vert u_0'(0) \vert^2 + u_0(0) u_0''(0)\\
& = & \psi'(0).
\end{eqnarray*}
As $u_0 \in H^2(0,P_\phi)$, we see that $\psi \in H^1(0,P_\phi)$ and is smooth at the endpoints. Furthermore, this function satisfies
\begin{eqnarray*}
\vert \psi'\vert^2 - q \vert \psi \vert^2 & = & \vert (u_0')^2 + u_0u_0''  \vert^2 - q\vert u_0 u_0' \vert^2 \\
& = & \left \vert (u_0')^2 - (q + \lambda_0) (u_0)^2  \right \vert^2 - q\vert u_0 u_0' \vert^2 \\
& = & \vert u_0' \vert^4 - 2(q + \lambda_0) \vert u_0 u_0' \vert^2\\
& \ & \  + (q + \lambda_0)^2 \vert u_0 \vert^4  - q\vert u_0 u_0' \vert^2 \\
& = & \vert u_0' \vert^4 - 2\lambda_0(u_0  u_0')^2 + (q + \lambda_0)^2 \vert u_0 \vert^4\\
& \ & \  - 3q\vert u_0 u_0' \vert^2
\end{eqnarray*}
Finally, we compute each portion of  the representation given in Lemma \ref{L1}. For the first integral, we have
\begin{equation*}
\begin{split}
& \int_0^{P_\phi} \biggl [ \vert \psi'(x) \vert^2  - q(x) \vert \psi(x) \vert^2 \biggr ] \ dx\\
& = \int_0^{P_\phi} \biggl [\vert u_0' \vert^4 - 2\lambda_0 \vert u_0  u_0' \vert^2 + (q + \lambda_0)^2 \vert u_0 \vert^4  - 3q\vert u_0 u_0' \vert^2 \biggr ] \ dx\\
\end{split}
\end{equation*}
For the first term of the integral, we integrate by parts and use (\ref{psi0}) to yield
\begin{eqnarray*}
 \int_0^{P_\phi} \vert u_0'(x) \vert^4 \ dx & = &  \int_0^{P_\phi} \left ( \frac{d}{dx} u_0(x) \right ) (u_0'(x))^3 \ dx \\
& = & -3 \int_0^{P_\phi} u_0(x) \vert u_0'(x) \vert^2 u_0''(x) \ dx\\
& = & 3 \int_0^{P_\phi} (q(x) + \lambda_0) \vert u_0(x) u_0'(x) \vert^2 \ dx\\
\end{eqnarray*}
Notice here that we have utilized the conditions $u_0(0) = u_0 \left (P_\phi \right) = 0$  to eliminate boundary terms arising from integration by parts.  Hence, we find
\begin{equation*}
\begin{split}
\int_0^{P_\phi} & \biggl [ \vert \psi'(x) \vert^2  - q(x) \vert \psi(x) \vert^2 \biggr ] \ dx\\
&  = \int_0^{P_\phi} \biggl [ \lambda_0 \vert u_0  u_0' \vert^2 + (q + \lambda_0)^2 \vert u_0 \vert^4 \biggr ] \ dx.
\end{split}
\end{equation*}
The first term of this expression can be simplified further by multiplying (\ref{psi0}) by $u_0^3$ and integrating so that
\begin{eqnarray*}
\int_0^{P_\phi}  (q(x) + \lambda_0) \vert u_0(x) \vert^4 \ dx & = & -\int_0^{P_\phi} u_0''(x) (u_0(x))^3 \ dx\\
& = & \int_0^{P_\phi} u_0'(x) \frac{d}{dx}\left [ (u_0(x))^3 \right ] \ dx\\
& = & 3\int_0^{P_\phi} \vert u_0'(x) u_0(x) \vert^2 \ dx.
\end{eqnarray*}
Using this along with the even symmetry about $\frac{1}{2} P_\phi$ of $\vert u_0 \vert^4$ and $q$, the first piece of the representation satisfies
\begin{equation*}
\begin{split}
\int_0^{P_\phi} & \biggl [ \vert \psi'(x) \vert^2  - q(x) \vert \psi(x) \vert^2 \biggr ] \ dx\\
& =  \int_0^{P_\phi} \biggl [ \frac{1}{3} \lambda_0 (q + \lambda_0) \vert u_0 \vert^4 + (q + \lambda_0)^2 \vert u_0 \vert^4 \biggr ] \ dx\\
& = \int_0^{P_\phi} (q + \lambda_0) \left (q + \frac{4}{3}\lambda_0 \right ) \vert u_0 \vert^4 \ dx
\end{split}
\end{equation*}
Now, since $\psi$ is odd about $\frac{1}{2}P_\phi$, the remaining terms in the representation of Lemma \ref{L1} will vanish. More precisely, for every $e \in [\phi_+,\infty)$ we have
\begin{equation}
\label{vanish1}
\int_{0}^{P_\phi} \frac{\psi(y)}{\sqrt{2(e-\phi(y))}} dy = 0
\end{equation}
and for every $e \in [\phi_-, \phi_+]$ we have
\begin{equation}
\label{vanish2}
\int_{\alpha}^{P_\phi-\alpha} \frac{\psi(y)}{\sqrt{2(e-\phi(y))}} dy = 0.
\end{equation}
To prove (\ref{vanish1}), we first decompose the integral into the reflected components of $\psi$,
\begin{eqnarray*}
\int_{0}^{P_\phi} \frac{\psi(y)}{\sqrt{2(e-\phi(y))}} dy &=& \int_{0}^{\frac{1}{2}P_\phi} \frac{\psi(y)}{\sqrt{2(e-\phi(y))}} dy\\
& \ &  + \int_{\frac{1}{2}P_\phi}^{P_\phi} \frac{\psi(y)}{\sqrt{2(e-\phi(y))}} dy\\
& = & I + II.
\end{eqnarray*}
Now, computing term $II$, we change variables $z = P_\phi - y$ and use the symmetry of $\psi$ and $\phi$ from (\ref{four}) to demonstrate the cancellation of these terms, so that
\begin{eqnarray*}
II &=& \int_{\frac{1}{2}P_\phi}^{P_\phi} \frac{-\psi(P_\phi-y)}{\sqrt{2(e-\phi(y))}} dy\\
&=& -\int_0^{\frac{1}{2}P_\phi} \frac{\psi(z)}{\sqrt{2(e-\phi(P_\phi - z))}} dz\\
&=& -\int_0^{\frac{1}{2}P_\phi} \frac{\psi(z)}{\sqrt{2(e-\phi(z))}} dz\\
&=& -\int_0^{\frac{1}{2}P_\phi} \frac{\psi(z)}{\sqrt{2(e-\phi(z))}} dz\\
& = & - I
\end{eqnarray*}
Hence, adding these terms yields (\ref{vanish1}).  Turning to the justification of (\ref{vanish2}), we again divide the interval of integration about the midpoint $\frac{1}{2} P_\phi$.  Using the symmetry and periodicity of $\phi$ and $u$, we perform the same change of variables as above to find
\begin{equation*}
\begin{split}
\int_{\frac{1}{2}P_\phi}^{P_\phi - \alpha} & \frac{\psi(y)}{\sqrt{2(e-\phi(y))}} dy\\
& = -\int_{\frac{1}{2}P_\phi}^{P_\phi - \alpha}\frac{\psi(P_\phi - y)}{\sqrt{2(e-\phi(y))}} dy\\
& = -\int_\alpha^{\frac{1}{2}P_\phi} \frac{\psi(z)}{\sqrt{2(e-\phi(P_\phi - z))}} dz\\
& = -\int_{\alpha}^{\frac{1}{2}P_\phi} \frac{\psi(z)}{\sqrt{2(e-\phi(z))}} dz.
\end{split}
\end{equation*}
Hence, we conclude (\ref{vanish2}) by adding these terms.

Finally, from these computations and in view of assumption (\ref{six}), we find
\begin{eqnarray*}
\mcL[\psi] & = & \int_0^{P_\phi} \biggl [ \vert \psi'(x) \vert^2 - \int \mu'(e) \ dv \vert \psi(x) \vert^2 \biggr ] \ dx\\
& & \quad + \int_{\phi_+}^\infty \mu'(e) \frac{1}{P_f} \biggl ( \int_0^{P_\phi} \frac{\psi(y)}{\sqrt{2(e - \phi(y))}} \ dy \biggr )^2 \ de\\
& & \quad + 2 \int_{\phi_-}^{\phi_+} \mu'(e) \frac{1}{P_t} \biggl ( \int_{\alpha}^{P_\phi - \alpha} \frac{\psi(y)}{\sqrt{2(e - \phi(y))}} \ dy \biggr )^2 \ de\\
& = & 2\int_0^{\frac{1}{2} P_\phi} (q(x) + \lambda_0) \left (q(x) + \frac{4}{3} \lambda_0 \right ) \vert u_0(x) \vert^4 \ dx\\
&<& 0.
\end{eqnarray*}
which proves the main result.
\end{proof}


Finally, we conclude with the proof of Theorem \ref{T2}.
\begin{proof}[Theorem \ref{T2}]
Let $\mu \in C^1$ and $\phi \in C^2$ satisfying
\begin{equation}
\label{phidp}
\phi''(x) = 1- \int \mu \left (\frac{1}{2} v^2 + \phi(x) \right ) \ dv
\end{equation}
and assumptions (\ref{one}) - (\ref{four}) be given.  We will take advantage of the regularity of these functions.  Assume $q(x) \leq 0$ for all $x \in \left [0,\frac{1}{2} P_\phi \right ].$ Using (\ref{phidp}), we see that the regularity of the right side implies that $\phi \in C^3$ and
$$\phi'''(x) = - q(x) \ \phi'(x).$$
Because $q$ assumes only negative values, we see that $\phi'''(x)$ and $\phi'(x)$ must have the same sign for all $x \in \left [0,\frac{1}{2} P_\phi \right ]$.  From (iv), $\phi$ is strictly decreasing on the interval $[0,\frac{1}{2} P_\phi]$ with a maximum and minimum at $\phi(0)$ and $\phi \left (\frac{1}{2} P_\phi \right )$, respectively.  Thus, $\phi'(0)=\phi'(\frac{1}{2} P_\phi)=0$ and $\phi'(x) < 0$ on the interval $\left (0,\frac{1}{2} P_\phi \right )$.  Therefore, $\phi''' \leq 0$ on $\left (0,\frac{1}{2}P_\phi \right )$ and thus $\phi''$ is decreasing on $\left ( 0, \frac{1}{2} P_\phi \right )$. However, from the above conditions on $\phi'$, we see that $\phi'$ must transition from decreasing to increasing on some subinterval of $\left (0, \frac{1}{2}P_\phi \right )$, and this implies that $\phi''$ must transition from negative values to positive values on the same subinterval.  Clearly such a transition cannot occur if $\phi''(x)$ is decreasing for every $x \in \left ( 0, \frac{1}{2} P_\phi \right )$ and we arrive at a contradiction.  Therefore, there exists $x \in \left [0,\frac{1}{2} P_\phi \right ]$ such that $$q(x) = \int \mu'\left ( \frac{1}{2}v^2 + \phi(x) \right ) \ dv > 0.$$ Additionally, this condition cannot be satisfied for any $\mu$ with $\mu'(e) \leq 0$ for all $e \in [\phi_-,\infty)$ and this completes the proof.

\section{Conclusion}

In summary, specific conditions were determined that guarantee the linear instability of periodic BGK waves for the one-dimensional Vlasov-Poisson system. 
This result then limits the variety of BGK waves which may be stable under singly-periodic perturbations.  
In particular, our main results demonstrate that for a BGK wave to be stable, specific values of a quantity involving the derivative of the particle distribution function cannot be essentially localized within a given interval. 
While this sheds some light on the nature of BGK waves, it remains unknown as to whether they are stable or unstable with respect to general perturbations which share their period. Hence, it seems that the stability properties of these solutions greatly depend on the local behavior of the distribution function and potential.
Additionally, it was shown that modes of the linearized system occur in pairs of opposite sign, thereby displaying that for every perturbation that decays exponentially in time there must be a corresponding perturbation that grows exponentially in time.
Finally, it was shown that the particle distribution function cannot be monotonically decreasing for all energy values, which also limits the stability properties of these waves.  
\end{proof}

\bibliographystyle{acm}
\bibliography{SDPrefs}

\end{document}